\newtheorem{theorem}{Theorem}
\newtheorem{lemma}{Lemma}
\newcommand{\Zeta}{Z}
\newcommand{\SL}{\operatorname{SL}}
\newcommand{\GL}{\operatorname{GL}}
\newcommand{\Irr}{\operatorname{Irr}}
\newcommand{\Ind}{\operatorname{Ind}}
\title{Splitting fields of real irreducible representations of finite groups}
\author[1]{Dmitrii V. Pasechnik}
\affil[1]{Department of Computer Science\\
University of Oxford, UK\footnote{email: \url{dima@pasechnik.info}}
}
\date{2021-07-15}
\begin{document}
\maketitle
\begin{abstract}
We show that any irreducible representation $\rho$ of a finite group $G$
of exponent $n$, realisable over $\mathbb{R}$, is realisable over the field 
$E:=\mathbb{Q}(\zeta_n)\cap\mathbb{R}$ of real cyclotomic numbers of order $n$, and
describe an algorithmic procedure transforming a realisation of $\rho$ over  $\mathbb{Q}(\zeta_n)$
to one over $E$.
\end{abstract}

\section{Introduction} \label{introduction}

Let $G$ be a finite group of exponent $n$. A celebrated result by R.~Brauer
states that any complex irreducible character $\chi\in\Irr(G)$ of $G$
is afforded by an $F$-representation $\rho_\chi: G\to\GL_d(F)$, where $F=\mathbb{Q}(\zeta_n)$, the field of
cyclotomic numbers of order $n$ (here $\zeta_n:=e^{\frac{2\pi i}{n}}$), see \cite[(10.3)]{Is}.
Let $E:=\mathbb{Q}(\zeta_n)\cap\mathbb{R}\subset F$ be the maximal real subfield of $F$.
The first result of this note is as follows.
\begin{theorem}\label{mainthm}
Let $\chi$ be an irreducible real-valued character of $G$ of degree $d:=\chi(1)$
with Frobenius-Schur indicator $\nu_2(\chi)=1$.
Then $E$ is a splitting field of $\chi$,
i.e. $\chi$ is afforded by an $E$-representation $\rho$, and the Schur index $m_E(\chi)$ equals $1$.
\end{theorem}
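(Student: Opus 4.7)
My plan is to identify the obstruction to realising $\chi$ over $E$ with the Brauer class of the $E$-Wedderburn component $A_E$ of $EG$ attached to $\chi$, and then to trivialise it. Since $\chi$ is real-valued with values in $F$, we have $\mathbb{Q}(\chi)\subseteq E$, so $A_E$ is a central simple $E$-algebra of degree $d$; Brauer's theorem (which gives $m_F(\chi)=1$) forces $A_E\otimes_E F\cong M_d(F)$, so $[A_E]$ lies in the relative Brauer group $\mathrm{Br}(F/E)\cong E^*/N_{F/E}(F^*)$, a group of exponent $2$. The theorem thus reduces to showing $[A_E]=0$.

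The key step is to invoke the Brauer--Witt theorem: as a Schur algebra split by the cyclotomic extension $F/E$, $A_E$ is Brauer-equivalent to a cyclic cyclotomic algebra of the form $(F/E,\sigma,\zeta)$, where $\sigma$ generates $\mathrm{Gal}(F/E)$ and $\zeta$ is a root of unity lying in $E^*$. Since $E$ is totally real, $\mu(E)=\{\pm1\}$, and therefore $[A_E]\in\{[1],\,[-1]\}$ in $E^*/N_{F/E}(F^*)$.

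The Frobenius--Schur hypothesis $\nu_2(\chi)=1$ then rules out the class $[-1]$. At any real embedding $v$ of $E$ (and all archimedean places of the totally real $E$ are real) one has $N_{F_w/E_v}(F_w^*)=N_{\mathbb{C}/\mathbb{R}}(\mathbb{C}^*)=\mathbb{R}_{>0}$, so $[-1]_v$ is the nontrivial class in $\mathrm{Br}(F_w/E_v)$, represented by the Hamilton quaternions. On the other hand, $A_E\otimes_E\mathbb{R}$ is the $\mathbb{R}$-Wedderburn component of $\mathbb{R}G$ attached to $\chi$ at this embedding, and equals $M_d(\mathbb{R})$ exactly when $\nu_2(\chi)=1$; as $\nu_2$ is a Galois-invariant integer (so $\nu_2(\chi^\tau)=\nu_2(\chi)=1$ for every $\tau$), the same holds at every real embedding of $E$. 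Therefore $[A_E]_v=0$ at every real $v$, which forces $[A_E]=[1]$, hence $A_E\cong M_d(E)$ and $m_E(\chi)=1$.

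The hard part I anticipate is making the Brauer--Witt step precise — namely, showing that the cyclic cyclotomic representative of $A_E$ has its cocycle parameter $\zeta$ in $E$ itself (and hence in $\mu(E)=\{\pm1\}$) rather than merely in some strictly larger cyclotomic extension of $E$. This hinges on the fact that $F=\mathbb{Q}(\zeta_n)$ (for $n=\exp G$) is the minimal cyclotomic extension needed in the Brauer--Witt reduction of $A_E$, so that the cyclotomic representative can be taken with $L=F$ and cocycle values in $\mu(E)$.
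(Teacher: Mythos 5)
Your route is genuinely different from the paper's (which uses Serre's induction theorem for real characters, the divisibility $m_E(\chi)\mid[\mathrm{Ind}_H^G\phi,\chi]$, and explicit $2\times 2$ rotation/reflection matrices over $E$, avoiding Brauer groups entirely), and the local analysis at the real places is correct: since $E$ is totally real and $F$ totally imaginary, $(F/E,\sigma,-1)$ localises to the Hamilton quaternions at every infinite place, while $\nu_2(\chi^\tau)=\nu_2(\chi)=1$ forces $A_E\otimes_{E,v}\mathbb{R}\cong M_d(\mathbb{R})$ there; given the dichotomy $[A_E]\in\{[1],[-1]\}$ this kills $[-1]$ by mere functoriality of localisation, with no Hasse principle needed. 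But the dichotomy itself is the whole theorem, and you have not established it. The Brauer--Witt theorem does not hand you a \emph{cyclic} algebra $(F/E,\sigma,\zeta)$ with $\zeta\in\mu(E)$; it gives a crossed product $(L/E,\tau)$ with $L/E$ cyclotomic and $\tau$ valued in $\mu(L)$ --- a priori the cocycle values are roots of unity of $L$, not of $E$, and your stated reason (``$F$ is the minimal cyclotomic extension needed'') does not bridge that. Two separate things must be checked. First, that $L$ can be taken inside $E(\zeta_n)=F$: this does follow from the standard form of Brauer--Witt (the subgroup characters appearing in the Witt--Berman reduction have values in $\mathbb{Q}(\zeta_n)$), but it needs to be cited precisely, not inferred from ``minimality.'' Second --- and this is the step you are actually missing --- one must compute the image of $H^2(\mathrm{Gal}(F/E),\mu(F))$ in $\mathrm{Br}(F/E)\cong E^*/N_{F/E}(F^*)$. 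Since $[F:E]=2$, $\mu(F)^{\mathrm{Gal}}=\{\pm1\}$ and $N_{F/E}(\zeta)=\zeta\overline{\zeta}=1$ for every root of unity, so $H^2(\mathrm{Gal}(F/E),\mu(F))\cong\{\pm1\}$ and any cyclotomic crossed product $(F/E,\tau)$ is Brauer-equivalent to $(F/E,\sigma,\pm1)$. This little norm computation is what converts ``cocycle valued in $\mu(F)$'' into ``parameter in $\mu(E)=\{\pm1\}$''; without it your reduction to two classes is an assertion, not a proof.

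With those two points supplied, your argument closes and is a legitimate alternative proof; it buys a clean conceptual picture (the obstruction is a quaternion class over a totally real field, trivial at infinity by the Frobenius--Schur hypothesis and of root-of-unity type by Brauer--Witt) at the cost of substantially heavier machinery. The paper's proof is more elementary and self-contained: Brauer--Speiser gives $m_E(\chi)\in\{1,2\}$, Serre's theorem writes $\chi$ as a $\mathbb{Z}$-combination of characters induced from linear, $\lambda+\overline{\lambda}$, and dihedral characters, each of which is exhibited explicitly as an $E$-representation, whence $m_E(\chi)$ divides $1=[\chi,\chi]$. If you want to pursue your version, the Brauer--Witt step is where all the work lives, and it deserves a full paragraph with a precise reference rather than a forward pointer.
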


Our proof of Theorem~\ref{mainthm} invokes Serre's induction theorem for real characters \cite{MR321908},
\cite[Theorem 73.18]{MR892316},
and then follows the line of proof of Brauer's theorem \cite[(10.3)]{Is}.
It is surprising that it has not appeared anywhere, at least as far as we know.

\paragraph{Remark.} Independently and simultaneously, Robert Guralnick and Gabriel Navarro proved Theorem~\ref{mainthm}
by a similar method, although not using  \cite{MR321908}.

Recall that the \emph{Frobenius-Schur indicator} $\nu_2(\chi):=\frac{1}{|G|}\sum_{g∈ G}\chi(g^2)$ is an invariant classifying complex representations of $G$ into
three different types, see \cite[(4.5)]{Is}.
Namely, $\nu_2(\chi)=0$ if $\chi$ is not real-valued, and $\nu_2(\chi)=-1$ if $\chi$ is real-valued, but is not afforded by a real-valued representation; $\nu_2(\chi)=1$ if
and only if $\chi$ is afforded by a real-valued representation. 

For a number field $K⊇ \mathbb{Q}$, the \emph{Schur index} $m_K(\chi)$ is an invariant of $\chi$
controlling the possibility to realise $\rho_\chi$ over $K$, see e.g. \cite[Sect.~41]{MR2215618} and \cite[Chapter~10]{Is}. Namely, let $S⊇ K$ be a splitting 
field of $\chi$. Then $m_K(\chi):=\min_{\substack{K⊆ M⊆ S\\ \text{$\rho_\chi$ realisable over $M$}}}  [M:K(\chi)]$, where we denoted by $[M:K(\chi)]$
the degree of $M$ as a field extension over $K(\chi)$, the field extension of $K$ generated by
the values of $\chi$. In particular, the claim of Theorem~\ref{mainthm} amounts to stating
that $m_E(\chi)=1$.

Apart from  theoretical significance, the question of finding a splitting field is relevant
in group theory algorithms. Standard algorithms such as J.~Dixon's algorithm 
\cite{MR1235797}
for constructing complex, and real, irreducible representations 
(one implementation in the computer algebra system GAP \cite{GAP4}
of it is described in \cite{MR2630014}) 
do induction from 1-dimensional
representations of subgroups of $G$, which are defined over $F$.
One advantage of working over $E$ instead is that the degree of $E$
is half of the degree of $F$.

In particular for applications, e.g. in extremal combinatorics, in physics, etc.
it is often necessary to reduce
a representation to a direct sum of real irreducibles, and exact
methods for this process benefit from explicit knowledge of the irreducibles,
using well known formulas from \cite[Sect.2.7]{MR0450380}, as implemented
in our GAP package RepnDecomp \cite{Hymabaccus2020}. 

Our second result amounts to the algorithmic counterpart of Theorem~\ref{mainthm}, that is,
 to a procedure to compute, for a representation
$\rho:G\to \GL_d(F)$
 realisable over reals,
an explicit matrix $Q∈ \GL_d(F)$ such that $Q^{-1}\rho(G)Q⊂ \GL_d(E)$,
i.e. $Q$ transforms $\rho$ to an $E$-representation.

\begin{theorem}\label{compthm}
	Let $\rho:G\to\GL_d(F)$ be a  representation of $G$ realisable over $\mathbb{R}$.
Then $P∈ \SL_d(F)$ such that $P\rho(g)=\overline{\rho(g)}P$ for any $g\in G$, and $P\overline{P}=I$, can be explicitly computed
from the $\rho(G)$-invariant forms. 
Let $ξ   ∈ F^*$ s.t.
$-\frac{\overline{ξ}}{ξ}$ is not an eigenvalue of $P$, and $Q:=\overline{ξ P}+ξ I$.
Then $Q∈ \GL_d(F)$ and
$Q^{-1}\rho(G)Q⊂ \GL_d(E)$.
\end{theorem}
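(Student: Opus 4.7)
The plan is to first construct $P$ by averaging invariant forms, then exploit the key algebraic identity $\overline{Q}=PQ$ to deduce that conjugation by $Q$ kills the action of complex conjugation on $\rho$.

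For the construction of $P$, an invariant positive-definite Hermitian form $H$ exists on $F^d$ by the standard averaging argument $H:=\frac{1}{|G|}\sum_{g\in G}\rho(g)^*H_0\rho(g)$ applied to any positive-definite $H_0$; because the Frobenius-Schur indicator is $+1$ on every irreducible constituent of $\rho$, an invariant nondegenerate symmetric bilinear form $B$ also exists and may be produced by averaging in an analogous way. Reading the invariances $\overline{\rho(g)}^T H\rho(g)=H$ and $\rho(g)^T B\rho(g)=B$ as matrix identities and combining them yields $P:=\overline{H}^{-1}B$, which satisfies the intertwining relation $P\rho(g)=\overline{\rho(g)}P$ for all $g\in G$. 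Schur's lemma, applied to the intertwiner $P\overline{P}$ of $\overline{\rho}$ on each isotypic component, forces $P\overline{P}$ to be scalar; taking complex conjugates of the identity $P\overline{P}=cI$ shows the scalar is real. After rescaling $P$ by a suitable element of $F^*$ one arranges $P\overline{P}=I$, and a further unimodular adjustment normalises $\det P=1$.

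For $Q$, the critical observation is
\[
PQ=P(\xi I+\overline{\xi P})=\xi P+\overline{\xi}\,P\overline{P}=\xi P+\overline{\xi}I=\overline{Q},
\]
so $\overline{Q}=PQ$. Writing $Q=\xi(I+(\overline{\xi}/\xi)\overline{P})$ shows that $Q$ fails to be invertible exactly when $-\xi/\overline{\xi}$ is an eigenvalue of $\overline{P}$, equivalently when $-\overline{\xi}/\xi$ is an eigenvalue of $P$; the hypothesis on $\xi$ rules this out, and the relation above gives $Q\overline{Q}^{-1}=Q(PQ)^{-1}=P^{-1}=\overline{P}$.

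Finally, $Q^{-1}\rho(g)Q$ automatically lies in $GL_d(F)$, so to conclude it lies in $GL_d(E)=GL_d(F)\cap GL_d(\mathbb{R})$ it suffices to check it is fixed by complex conjugation. The condition $\overline{Q^{-1}\rho(g)Q}=Q^{-1}\rho(g)Q$ rearranges to $Q\overline{Q}^{-1}\,\overline{\rho(g)}=\rho(g)\,Q\overline{Q}^{-1}$, which by $Q\overline{Q}^{-1}=\overline{P}$ becomes $\overline{P}\,\overline{\rho(g)}=\rho(g)\,\overline{P}$ --- precisely the complex conjugate of $P\rho(g)=\overline{\rho(g)}P$. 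The main obstacle will be the normalisation step in the first paragraph: arranging $P\overline{P}=I$ and $\det P=1$ simultaneously over $F$ requires careful scaling on each isotypic component of $\rho$, whereas the remainder reduces to the short computation above once the identity $\overline{Q}=PQ$ has been spotted.
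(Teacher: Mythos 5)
Your construction of $P$ from the two invariant forms, the Schur's-lemma argument for $P\overline{P}=cI$ with $c$ real, the identity $PQ=\overline{Q}$, the invertibility criterion for $Q$, and the final descent to $E=F\cap\mathbb{R}$ all match the paper's argument and are correct. The gap is in the sentence ``After rescaling $P$ by a suitable element of $F^*$ one arranges $P\overline{P}=I$.'' If $P\overline{P}=\mu I$ with $0<\mu\in E$, then replacing $P$ by $xP$ with $x\in F^*$ gives $(xP)(\overline{xP})=x\overline{x}\,\mu I$, so you need an $x\in F^*$ with $x\overline{x}=\mu^{-1}$. This is a relative norm equation for the CM extension $F/E$, and the norm map $N_{F/E}:F^*\to E^*$ is \emph{not} surjective onto the totally positive elements of $E$ in general (there are local obstructions), so the existence of such an $x$ is not a matter of ``careful scaling'' --- it is a genuine number-theoretic assertion. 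In the paper this is precisely the single point where Theorem~\ref{mainthm} is invoked: the solvability of $x\overline{x}=\mu$ is equivalent to the vanishing of the obstruction measured by the Schur index, and $m_E(\chi)=1$ is what guarantees a solution exists. Your closing remark correctly flags the normalisation as the main obstacle, but attributes the difficulty to bookkeeping over isotypic components rather than to the norm equation, and offers no argument for its solvability; as written, the proof of the hypothesis $P\overline{P}=I$ (on which everything after the first paragraph depends) is missing.

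A secondary, smaller point: you assert a ``further unimodular adjustment normalises $\det P=1$,'' but rescaling $P$ by $y\in F^*$ multiplies $\det P$ by $y^d$ while multiplying $P\overline{P}$ by $y\overline{y}$, so achieving $\det P=1$ and $P\overline{P}=I$ simultaneously also needs justification (the paper's Lemma~\ref{odddeg} gives an explicit fix for odd $d$, bypassing the norm equation entirely in that case). The rest of your argument --- in particular the computation $PQ=\xi P+\overline{\xi}P\overline{P}=\overline{Q}$ and the equivalence of $\overline{Q^{-1}\rho(g)Q}=Q^{-1}\rho(g)Q$ with the conjugated intertwining relation --- is exactly the paper's route and needs no change.
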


The only part of Theorem~\ref{compthm} which uses Theorem~\ref{mainthm} is the claim
that $P$ can be chosen so that $P\overline{P}=I$. 
Algorithmically, one computes $P$ s.t.  $P\overline{P}=μ I$ for $0<μ∈ E$, 
and then has to solve the \emph{norm equation} 
\begin{equation}\label{normeq}
x\overline{x}=\mu,\qquad \text{for $x∈ F$.}
\end{equation}
Theorem~\ref{mainthm} implies that \eqref{normeq} is always solvable.
Several parts of the proof of Theorem~\ref{compthm} are contained
in \cite{MR1446124} and \cite{MR2531221}, 
although our approach is more explicit, and for odd $d$ we provide an explicit
solution (Lemma~\ref{odddeg}), not involving solving \eqref{normeq}, 
which is a nontrivial number-theoretic problem.

\section{Proof of Theorem~\ref{mainthm}}

Our main tool is Serre's induction theorem \cite[(73.18)]{MR892316}.
\begin{theorem}\label{serrethm}
  (Serre) The character $\chi$ of a real representation of $G$
  is a $\mathbb{Z}$-linear combination
  \begin{equation}\label{serreeq}
  \chi=\sum_{\phi}a_\phi  \Ind_H^G(\phi) 
  \end{equation} of real-valued induced characters
  $\Ind_H^G(\phi)$, with $H≤ G$, and $\phi$ a character of $H$.
  Further, $\phi$ is
  either linear
  and takes values $± 1$, or
  $\phi=\lambda+\overline{\lambda}$ for a linear character $\lambda$ of $H$, or 
  $\phi$ is dihedral. \qed
\end{theorem}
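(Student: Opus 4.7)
The plan is to model the proof on Brauer's induction theorem, replacing elementary subgroups and linear inducing characters by a refined class adapted to real characters, and then using Galois symmetrization together with an explicit local analysis at the prime $2$ to produce the three listed types of inducing characters.

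First I would invoke Brauer's induction theorem in its standard form, which expresses $\chi$ as an integer combination $\chi=\sum_i c_i\,\mathrm{Ind}_{H_i}^G(\lambda_i)$ with each $H_i$ elementary and each $\lambda_i$ linear; these $\lambda_i$ are however not generally real. Applying complex conjugation and using $\chi=\bar\chi$ produces
\[
2\chi \;=\; \sum_i c_i\,\mathrm{Ind}_{H_i}^G(\lambda_i+\bar\lambda_i).
\]
In each summand, either $\lambda_i$ is real, in which case linearity forces $\lambda_i$ to be $\pm1$-valued and $\lambda_i+\bar\lambda_i=2\lambda_i$ is of the first listed type, or else $\lambda_i+\bar\lambda_i$ is a real-valued character of the second listed type. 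So after this Galois symmetrisation $2\chi$ already has a presentation of the required shape; the issue is to divide by $2$.

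To handle the factor of $2$ I would prove a local assertion at each $2$-elementary subgroup $H=P\times C$ of $G$ ($P$ a $2$-group, $C$ cyclic of odd order): the real virtual character lattice $R_{\mathbb{R}}(H)$ is spanned over $\mathbb{Z}$ by characters of the three types in the statement, taken on subgroups of $H$. The $\pm1$-valued linear characters of $H$ and the sums $\lambda+\bar\lambda$ for non-real linear $\lambda$ span a sublattice of index a power of $2$ in $R_{\mathbb{R}}(H)$, and the missing classes are precisely filled by the $2$-dimensional real irreducibles of dihedral subquotients of $P$, since such characters are genuinely $2$-dimensional over $\mathbb{C}$ and cannot be written as $\lambda+\bar\lambda$ for a linear $\lambda$ on the same group. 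Packaging this local statement as a Witt-style surjectivity of the induction map from the family of real-elementary subgroups and using Frobenius reciprocity then allows one to rewrite the $2$ in $2\chi$ as the multiplicity appearing naturally in an induction from a dihedral subquotient, producing an integral expression for $\chi$ itself.

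The hard part will be the local $2$-elementary statement of the previous paragraph: establishing that dihedral characters exactly span the index-$2$ gap between the lattice generated by $\pm 1$-valued linear characters and sums $\lambda+\bar\lambda$ on the one hand, and the full real virtual character lattice on the other. This rests on a careful bookkeeping of Frobenius--Schur indicators of the irreducibles of $2$-groups, together with the classical description of their faithful irreducible real representations in terms of dihedral (and related) subquotients; once that local assertion is in hand, the global statement follows by the Brauer + symmetrisation machinery sketched above.
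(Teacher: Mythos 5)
First, a point of comparison: the paper does not prove this statement at all. It is Serre's induction theorem for real characters, imported from the literature (Serre's paper \cite{MR321908} and Curtis--Reiner, Theorem 73.18) and stated as a known black box, so your proposal is not an alternative to the paper's argument but an attempt to reprove a quoted result. Your opening move is fine: Brauer induction plus the symmetrisation $\chi=\overline{\chi}$ does give $2\chi=\sum_i c_i\,\mathrm{Ind}_{H_i}^G(\lambda_i+\overline{\lambda_i})$ with every summand of type (i) (doubled) or type (ii), and you correctly identify that everything then hinges on removing the factor $2$.

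But that removal is the entire content of the theorem, and your sketch asserts it rather than proves it. Two concrete problems. First, the local statement is attached to the wrong family of subgroups: you work with Brauer-elementary $H=P\times C$ (direct product), whereas the subgroups forced by the field $\mathbb{R}$ are the $\mathbb{R}$-elementary ones $C\rtimes P$ with the $2$-group $P$ acting on the odd cyclic group $C$ through $\{\pm1\}$; correspondingly, the dihedral characters that are genuinely needed are not characters of ``dihedral subquotients of $P$'' --- a dihedral group $D_{2m}$ with $m$ odd is not a subquotient of any $2$-group --- but of quotients mixing $C$ and $P$. Second, even granting a corrected local statement, ``packaging it as a Witt-style surjectivity'' is not a formal consequence of knowing that the cokernel of the induction map is killed by $2$: the Witt--Berman argument establishes surjectivity by a separate congruence argument at each prime (exhibiting a virtual character in the image of induction with prescribed value modulo $p$), and for $p=2$ this is exactly where the dihedral characters and the structure of $\mathbb{R}$-elementary subgroups do real work; your sentence claiming to ``rewrite the $2$ as a multiplicity'' contains no argument. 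As written, the step you flag as ``the hard part'' is the theorem itself, so the proposal is a plausible roadmap rather than a proof; to fill it in, follow the Witt--Berman template for $K=\mathbb{R}$ as in Curtis--Reiner rather than trying to divide the Brauer expression by $2$.
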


A \emph{diherdal character} $\phi$ of a group $H$ is a degree 2 irreducible character of $H$
s.t. $H/\ker\phi≅ D_{2m}$, dihedral group of order $2m$.

Note that by \cite[(10.2.f)]{Is} $m_E(\chi)$ divides $m_{\mathbb{Q}}(\chi)≤ 2$, where the latter inequality holds by the Brauer-Speiser Theorem \cite[p.171]{Is}.
Therefore it suffices to show that $m_E(\chi)=2$ is not possible in our situation.  

Let $\theta$ be a character of an $E$-representation of $G$. Then by \cite[(10.2.c)]{Is} $m_E(\chi)∣ [\theta,\chi]$.
Here $[,]$ is the usual scalar product of characters 
$[\theta,\chi]=\frac{1}{G}\sum_{g∈ G}\theta(g)\overline{\chi(g)}$, cf. \cite[(2.16)]{Is}.
As $\chi$ is irreducible, $[\chi,\chi]=1$, thus \eqref{serreeq} implies
\begin{equation}\label{serrescalar}
1=[\chi,\chi]=\sum_{\phi} a_\phi [\Ind_H^G(\phi),\chi]. 
\end{equation}
If every $\Ind_H^G(\phi)$ is an $E$-representation, then $m_E(\chi)=2$ is not possible, as otherwise an even integer on the right hand side of \eqref{serrescalar} equals 1.

It remains to see that every $\Ind_H^G(\phi)$ is an $E$-representation.

This is trivially
the case for linear $\phi$, and so we are left with the dihedral case and the case
$\phi=\lambda+\overline{\lambda}$. To simplify the rest of the proof, we use \cite[(10.9)]{Is}
which says that if a prime $p$ divides $m_E(\chi)$ then the Sylow $p$-subgroups of $G$
are not elementary abelian. For $p=2$ this means that $4∣ n$, i.e. $i:=\sqrt{-1}∈ F$. 

\begin{lemma}\label{Hchar}
  Let $H≤ G$, with $G$ of exponent $n$, $4∣ n$,
  and $\phi$ a character of $H$, either $\phi=\lambda+\overline{\lambda}$
  with $\lambda$ linear, or $\phi$ dihedral. Then $\phi$ is afforded by an $E$-representation.
\end{lemma}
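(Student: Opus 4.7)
The plan is to write down explicit $E$-valued matrix representations in both cases, exploiting the fact that $\phi$ factors through a cyclic or dihedral quotient of $H$ of order dividing $n$.

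In the first case, $\phi=\lambda+\overline{\lambda}$ with $\lambda$ linear, so $\phi$ factors through $C:=H/\ker\lambda$, a cyclic group of order $m$ for some $m\mid n$. Choosing a generator $c$ of $C$ with $\lambda(c)=\zeta$ a primitive $m$-th root of unity, I would send $c$ to the rotation matrix
\begin{equation*}
R:=\begin{pmatrix}\cos(2\pi/m) & -\sin(2\pi/m) \\ \sin(2\pi/m) & \cos(2\pi/m)\end{pmatrix},
\end{equation*}
which has order $m$ and trace $\zeta+\overline{\zeta}$, matching $\phi(c)$ and hence $\phi$ on all of $C$. In the dihedral case, $\phi$ factors through $H/\ker\phi\cong D_{2m}=\langle r,s\mid r^m=s^2=(rs)^2=1\rangle$, is faithful, and is one of the standard $2$-dimensional irreducibles. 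I would send $r\mapsto R^k$ (for the appropriate $k$ encoded by $\phi$) and $s\mapsto\mathrm{diag}(1,-1)\in GL_2(\mathbb{Q})$; the relation $srs=r^{-1}$ and the trace values are then immediate.

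The only nontrivial step, and the place where the hypothesis $4\mid n$ enters, is to verify that all matrix entries lie in $E$. The cosines cause no trouble: $\cos(2\pi k/m)=\tfrac{1}{2}(\zeta^k+\zeta^{-k})$ lies in $\mathbb{Q}(\zeta_m)\cap\mathbb{R}\subseteq E$ for every $m\mid n$. For the sines, $2i\sin(2\pi k/m)=\zeta^k-\zeta^{-k}\in F=\mathbb{Q}(\zeta_n)$; since $4\mid n$ forces $i\in F$, division keeps us in $F$, and being real the result belongs to $F\cap\mathbb{R}=E$. Beyond this verification and the check that $m\mid n$ (which follows from the exponent hypothesis on $G$), no essential obstacle is anticipated.
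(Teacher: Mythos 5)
Your proposal is correct and follows essentially the same route as the paper: explicit rotation matrices for the cyclic quotient and rotation-plus-reflection matrices for the dihedral quotient, with the hypothesis $4\mid n$ used only to place $\sin(2\pi k/m)$ in $E$. In fact your justification of that last point (writing $\sin(2\pi k/m)=(\zeta^k-\zeta^{-k})/(2i)$ with $i\in F$, hence a real element of $F$, hence in $E$) spells out a detail the paper merely asserts.
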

\begin{proof}
Note that $E=\mathbb{Q}(\zeta_n+\zeta_{n}^{-1})$ and
$2\cos\frac{2\pi}{n}=\zeta_n+\zeta_{n}^{-1}$. As $4∣n$, it can be shown that 
$\sin\frac{2\pi}{n}∈ E$ in this case (in general this is not true).

In the case $\phi=\lambda+\overline{\lambda}$ we have $H/\ker\phi$ a cyclic group $C$ of
order $m$ dividing $n$, $C≅⟨ \zeta_m⟩$. We have
$\Zeta_m:=\begin{pmatrix} \cos\frac{2\pi}{m} & -\sin\frac{2\pi}{m} \\ \sin\frac{2\pi}{m} & \cos\frac{2\pi}{m}\end{pmatrix}∈\SL_2(E)$, and 
\begin{align*}
\rho_{\phi} : C & → \SL_2(E)\\
\zeta_m^k & ↦   \Zeta_m^k, \qquad 0≤ k<m
\end{align*}
is the desired $E$-representation of $C$ with character $\phi$.

For dihedral $\phi$ we have $H/\ker\phi$ a dihedral group $D=⟨ a,b∣ 1=a^m=b^2=(ab)^2⟩$ of
order $2m$ dividing $n$, with normal cyclic subgroup $C$ of order $m$, so that
the restriction $\phi_C=\lambda+\overline{\lambda}$ is as in the previous case, and
$\phi_{D-C}=0$. We have
$\Zeta_m ∈\SL_2(E)$ as in the previous case, and 
$R_0:= \begin{pmatrix} 1&0\\0&-1\end{pmatrix}∈ \GL_2(E)$ satisfying $R_0\Zeta_m R_0=\Zeta_m^{-1}$
and
\begin{align*}
\rho_{\phi} : D & → \GL_2(E)\\
a^k b^\ell & ↦   \Zeta_m^k R_0^\ell, \qquad 0≤ k<m,\ 0≤ℓ≤1,
\end{align*}
is the desired $E$-representation of $D$ with character $\phi$.
\end{proof}

This completes the proof of Theorem~\ref{mainthm}. The last step, i.e. the proof of
Lemma~\ref{Hchar}, could also
be accomplished in a less explicit way, by invoking the construction of Theorem~\ref{compthm};
the matrix $P$ mapping $\rho_\phi$ to its conjugate can be chosen to be equal to
$P=\begin{pmatrix} 0&1\\1&0\end{pmatrix}$, satisfying the only condition, $P\overline{P}=I$.
In particular this approach allows to prove a more general version of Lemma~\ref{Hchar} which 
does not require $4∣ n$.

\section{Proof of Theorem~\ref{compthm}}
The case $n=2$ is trivial, and we will assume $n\geq 3$ in what follows.

Recall that in general, $\chi$ has values in $F$,
while a real-valued character has values in $E$.
Whenever $\chi$ is $E$-valued, the image $\rho(G)$ of $G$ under
a representation $\rho:=\rho_\chi$ affording
$\chi$ leaves invariant a unique, up to scalar multiplication, non-zero $G$-invariant form
$M$. It is a classical result due to Frobenius and Schur that
if $M$ is symmetric then $\chi$ is afforded by a real representation $\rho$,  and $\nu_2(\chi)=1$, 
cf. \cite[(4.19)]{Is}.

Without loss of generality, $\chi(1)>1$. Indeed, if $\chi(1)=1$ then $\rho$ is the same as $\chi$, and we are done.

The proof
of Frobenius-Schur in \cite[(4.19)]{Is} starts with the elementary fact that if \(Q\) is a
transformation making \(\rho\) real then
\(Q^{-1}\rho Q=\overline{Q^{-1}\rho Q}\), thus
\(\overline{Q}Q^{-1}\rho=\overline{\rho Q}Q^{-1}\), and
\(P:=\overline{Q}Q^{-1}\) transforms \(\rho\) to \(\overline{\rho}\),
i.e.~\(P^{-1}\overline{\rho}P=\rho\).
Such a \(P∈ \GL_d(\mathbb{C})\) must exist irrespective of the existence of $Q$, as the
characters of \(\rho\) and \(\overline{\rho}\) are equal, although we
can give an explicit construction $P=\Sigma^{-1}M$, 
with $M$ as above, and $\Sigma$ the matrix of a positive definite Hermitian $\rho(G)$-invariant
form. 
\begin{lemma}\label{Pexists}
  Let $\chi$ be a real-valued character of $G$, and $\rho=\rho_\chi$ an $F$-representation
	affording $\chi$. Then
  $P:=\Sigma^{-1}M ∈ \GL_d(F)$ satisfies $P\rho(g)=\overline{\rho(g)}P$
  for all $g∈ G$.
\end{lemma}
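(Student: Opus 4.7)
The plan is to verify the identity $P\rho(g)=\overline{\rho(g)}P$ by a direct matrix computation using only the defining invariance relations of $M$ and of $\Sigma$, and then separately to justify that $M$ and $\Sigma$ can be chosen with entries in $F$ so that $P\in\mathrm{GL}_d(F)$.

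First I would rewrite the two invariance relations in a convenient form. Because $M$ is the matrix of a $\rho(G)$-invariant bilinear form, $\rho(g)^T M\rho(g)=M$ for every $g\in G$, which rearranges to
\[
M\rho(g)=\rho(g)^{-T}M.
\]
Because $\Sigma$ is the matrix of a $\rho(G)$-invariant Hermitian form (taken with the convention $H(v,w)=v^T\Sigma\overline{w}$), we have $\rho(g)^T\Sigma\,\overline{\rho(g)}=\Sigma$, which rearranges to
\[
\Sigma^{-1}\rho(g)^{-T}=\overline{\rho(g)}\,\Sigma^{-1}.
\]
Combining the two identities is then a one-line chain:
\[
P\rho(g)=\Sigma^{-1}M\rho(g)=\Sigma^{-1}\rho(g)^{-T}M=\overline{\rho(g)}\,\Sigma^{-1}M=\overline{\rho(g)}\,P.
\]

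Second, I would argue existence and invertibility. Since $\chi$ is real-valued and $\rho$ is irreducible, $\rho\cong\rho^{*}$, so by Schur's lemma the space of $\rho(G)$-invariant bilinear forms is one-dimensional, and any nonzero $M$ in it is invertible (its kernel is a proper $\rho(G)$-invariant subspace, hence trivial). A positive definite Hermitian $\rho(G)$-invariant $\Sigma$ is produced by the standard averaging trick $\Sigma=\sum_{g\in G}\overline{\rho(g)}^{T}\rho(g)$, and is invertible by positive definiteness. For the field of definition, note that $F=\mathbb{Q}(\zeta_n)$ is stable under complex conjugation since $\overline{\zeta_n}=\zeta_n^{-1}\in F$; as $\rho$ is defined over $F$, both averages (of a rational bilinear form to produce $M$, and of the standard Hermitian form to produce $\Sigma$) have entries in $F$, and therefore $P=\Sigma^{-1}M\in\mathrm{GL}_d(F)$.

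The main obstacle is essentially bookkeeping: one must be careful about the bilinear-versus-sesquilinear conventions and track transposes against conjugate-transposes so that the two invariance identities fit together as above. Once this is pinned down, the verification of $P\rho(g)=\overline{\rho(g)}P$ is mechanical, and the $F$-rationality follows from the fact that $F$ is its own complex-conjugation closure.
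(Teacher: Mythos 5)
Your proof is correct and follows essentially the same route as the paper: the paper likewise sets $P=\Sigma^{-1}M$ with $M$ the nonzero invariant bilinear form (invertible by irreducibility, defined over $F$) and $\Sigma$ the averaged positive definite Hermitian invariant form, and verifies $P\rho(g)=\overline{\rho(g)}P$ by the same two-step substitution of the invariance relations. The only bookkeeping slip is that your averaged form $\sum_g\overline{\rho(g)}^{\top}\rho(g)$ satisfies $\overline{\rho(g)}^{\top}\Sigma\,\rho(g)=\Sigma$ rather than the relation $\rho(g)^{\top}\Sigma\,\overline{\rho(g)}=\Sigma$ that your chain of equalities uses; replacing it by $\sum_g\rho(g)^{\top}\overline{\rho(g)}$ (the paper's choice) fixes this without changing anything else.
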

\begin{proof}
As \(\chi\) is real, $\rho$ leaves invariant a nonzero \(G\)-invariant
bilinear form \(M\), i.e. \(g^\top Mg=M\) for all
\(g\in\rho\), cf.~e.g. \cite[(4.14)]{Is}.
As \(M\) can be found in the trivial sub-representation of the
tensor square of \(\rho\), \(M\in M_d({F})\). As well,
\(\det M\neq 0\), as the kernel of \(M\) would give rise to a
sub-representation of \(\rho\), contradicting irreducibility of
\(\rho\).

	Let \(\Sigma:=\sum_{h\in\rho(G)}h^\top \overline{h}\) - note that
\(\Sigma\) is a Hermitian positive definite matrix, in particular
	\(\det\Sigma> 0\), and $g^⊤ Σ\overline{g}=Σ$ for any $g∈ ρ(G)$.

Choose \(P:=\Sigma^{-1}M\). Let's check that
\(P^{-1}\overline{\rho}P=\rho\) (we use \(\det M\neq 0\) here). Let
	\(g\in\rho(G)\). Then, as
\((\overline{g}\Sigma^{-1}g^\top)^{-1}=(g^\top)^{-1}\Sigma \overline{g}^{-1}=\Sigma\),
\[\Sigma^{-1}Mg=\overline{g}\Sigma^{-1}g^\top Mg=\overline{g}\Sigma^{-1} M,\]
as required. 
\end{proof}
Now we have the equation 
\begin{equation}\label{PQeq}
PQ=\overline{Q}, \qquad \det Q\neq 0
\end{equation}
implying \(\overline{P}PQ=\overline{PQ}=Q\), i.e. \(\overline{P}P=I\).
The latter is an extra restriction, in the sense that our procedure does not
guarantee that $P$ computed as in Lemma~\ref{Pexists} satisfies \(\overline{P}P=I\).
In general, one will need to solve \eqref{normeq} and multiply $P$ by the
inverse of a solution. However, \eqref{normeq} will always be solvable by Theorem~\ref{mainthm}.

\begin{lemma}\label{PPbar}
  Let  $P∈ \GL_d(F)$ such that $Pg=\overline{g}P$
  for any $g∈\rho(G)$. Then $P\overline{P}=μ I$ for some $μ ∈ E$. 
\end{lemma}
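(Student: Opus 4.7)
The plan is to apply Schur's lemma. In the setting of Theorem~\ref{compthm} the representation $\rho$ is irreducible, so any matrix commuting with every element of $\rho(G)$ must be a scalar; the strategy is therefore to show that $\overline{P}P$ (and $P\overline{P}$) commute with $\rho(G)$, conclude that they equal some $\mu I$ with $\mu\in F$, and then argue that $\mu$ is fixed by complex conjugation.

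The first step is routine: conjugating the hypothesis $Pg=\overline{g}P$ yields the ``dual'' intertwining relation $\overline{P}\,\overline{g}=g\,\overline{P}$. Chaining the two gives
\[
\overline{P}P\,g \;=\; \overline{P}\,(\overline{g}P) \;=\; (g\,\overline{P})P \;=\; g\,\overline{P}P,
\]
so $\overline{P}P$ commutes with $\rho(G)$; by Schur's lemma it equals $\mu I$ for some scalar $\mu$, and $\mu\in F$ because $P,\overline{P}\in \mathrm{GL}_d(F)$.

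For the final step, I would rewrite $\overline{P}P=\mu I$ as $P=\mu\,\overline{P}^{-1}$, which immediately gives $P\overline{P}=\mu\,\overline{P}^{-1}\overline{P}=\mu I$ as well. Taking the complex conjugate of $P\overline{P}=\mu I$ then yields $\overline{P}P=\overline{\mu}\,I$, so $\mu=\overline{\mu}$ and hence $\mu\in F\cap\mathbb{R}=E$, as desired.

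There is no real obstacle: the whole argument reduces to the standard Schur-lemma trick once the two intertwining relations are placed side by side. The only subtlety worth flagging is that the conclusion $\overline{P}P=\mu I$ (rather than merely $\overline{P}P$ lying in the commutant) uses the irreducibility of $\rho$, which is inherited from the context of Theorem~\ref{compthm}.
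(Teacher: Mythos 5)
Your proof is correct. The first half (Schur's lemma applied to $\overline{P}P$, which commutes with $\rho(G)$ once you pair the hypothesis with its conjugate $\overline{P}\,\overline{g}=g\overline{P}$) is essentially the paper's argument, up to the cosmetic difference that the paper works with $P\overline{P}$ in the commutant of $\overline{\rho}$. Where you genuinely diverge is the second half, the reality of $\mu$: the paper substitutes the explicit intertwiner $P=\Sigma^{-1}M$ from Lemma~\ref{Pexists} and computes $\mu\Sigma=M\overline{\Sigma^{-1}M}=\overline{\mu}\Sigma$ using $M=M^\top$ and a Cholesky-type factorisation $\Sigma^{-1}=U\overline{U}^\top$ of the positive definite Hermitian form. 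Your argument is purely formal: from $\overline{P}P=\mu I$ you get $P\overline{P}=P(\overline{P}P)P^{-1}=\mu I$ with the \emph{same} scalar, and conjugating this identity returns $\overline{P}P=\overline{\mu}I$, forcing $\mu=\overline{\mu}$. This is cleaner and strictly more general: it applies to an arbitrary intertwiner $P\in\mathrm{GL}_d(F)$, exactly as the lemma is stated, and uses neither the symmetry of $M$ (which encodes $\nu_2(\chi)=1$) nor positive definiteness of $\Sigma$; the paper's computation, by contrast, literally covers only the particular $P$ of Lemma~\ref{Pexists} and extends to the general case only because any two intertwiners differ by a scalar whose norm $c\overline{c}$ is real. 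What the paper's version buys is that it keeps the reality of $\mu$ visibly attached to the concrete matrices $\Sigma$ and $M$ that the algorithm actually computes, but as a proof of the stated lemma your route is the better one.
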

\begin{proof}
Note that $\overline{Pg}=g\overline{P}$. Thus  $P\overline{P}\overline{g}=Pg\overline{P}=\overline{g}P\overline{P}$. Thus $P\overline{P}$
lies in the centraliser of an irreducible representation $\overline{\rho}$.
Hence, by Schur's Lemma, $P\overline{P}=μ I$, for some $\mu∈ F$.

It remains to show that $μ ∈ E$. Using Lemma~\ref{Pexists}, and recalling that
$\Sigma$ and $\Sigma^{-1}$ are Hermitian positive definite, i.e. $\Sigma^{-1}=U\overline{U}^⊤$, and $M=M^\top$, we have
$μ I=P\overline{P}=\Sigma^{-1}M\overline{\Sigma^{-1} M}$,
i.e. 
\[
  \mu\Sigma=M\overline{\Sigma^{-1}M}=
M\overline{U\overline{U}^⊤ M}=
M\overline{U}U^⊤\overline{M}=
(M\overline{U})(\overline{M\overline {U}})^⊤=
\overline{\mu\Sigma}^⊤
=\overline{\mu}\Sigma,
\] 
implying $\mu=\overline{\mu}$.
\end{proof}

It remains to solve \eqref{PQeq} so that \(Q\) has entries in the splitting field
of \(\rho\). Note that the solution of \eqref{PQeq} in
\cite[Ch.~4]{Is}
assumes that \(\rho\) is unitary; i.e.~\(\Sigma=I\); so in this case
\(P^\top=P\), and an explicit formula for \(Q\) is provided - which however
does not work for us, as it involves square roots of eigenvalues of
\(P\). Fortunately, in \cite[Prop.~1.3]{MR1446124}, there is an
algorithmic proof of existence of the required solution of \eqref{PQeq}. In
[loc.cit.] it is done for finite fields (and in bigger generality, for a
field automorphism \(\sigma\) of finite order, referring to this result
as a generalisation of \emph{Hilbert's Theorem 90}), and in
\cite{MR2531221}
it was noted that it works for number fields as well.
One can also find there an easier observation, that for a randomly chosen $Y∈ M_d(F)$
setting $Q=\overline{Y}+\overline{P}Y$ produces a solution to \eqref{PQeq} with
high probability. Here is an easy to prove variation of this claim.
\begin{lemma}\label{Qrnd}
  Let $P,Y∈ M_d(F)$ and $P\overline{P}=I$. Then $Q:=\overline{Y}+\overline{P}Y$
  satisfies $PQ=\overline{Q}$.
  Choosing $Y=ξ P$, with $ξ≠ 0$ and $-{ξ}/{\overline{ξ}}$ not being an eigenvalue of $\overline{P}$
  we have that $Q∈ M_d(F)$ satisfies \eqref{PQeq}.
\end{lemma}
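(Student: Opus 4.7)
The plan is a direct verification in two parts, with essentially no computation beyond bookkeeping. The only substantive observation is that the hypothesis $P\overline{P}=I$ forces $P$ to be invertible, hence $\overline{P}=P^{-1}$ and therefore $\overline{P}P=I$ as well.

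For the first assertion I would simply expand
\[
PQ = P\overline{Y}+P\overline{P}\,Y = P\overline{Y}+Y,
\]
using $P\overline{P}=I$, and observe that conjugating $Q = \overline{Y}+\overline{P}Y$ termwise yields $\overline{Q} = Y+P\overline{Y}$. The two sides agree, so $PQ=\overline{Q}$ holds for every choice of $Y\in M_d(F)$.

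For the specialisation $Y=\xi P$, the identity $\overline{P}P=I$ simplifies $\overline{P}Y = \xi\overline{P}P = \xi I$, so
\[
Q = \overline{\xi P}+\xi I,
\]
which recovers the closed form of $Q$ appearing in the statement of Theorem~\ref{compthm}. It remains to verify $\det Q\neq 0$. The matrix $\overline{\xi P}+\xi I = \overline{\xi}\,\overline{P}+\xi I$ is singular precisely when $-\xi$ is an eigenvalue of $\overline{\xi}\,\overline{P}$, equivalently (since $\xi\neq 0$) when $-\xi/\overline{\xi}$ is an eigenvalue of $\overline{P}$; this is excluded by hypothesis.

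There is no genuine obstacle in the argument; the point worth flagging is that assuming $P\overline{P}=I$ (rather than merely assuming $\overline{P}P=I$) is already enough to give both products equal to $I$, and this is exactly what lets the choice $Y=\xi P$ collapse to the explicit form $\overline{\xi P}+\xi I$ used later.
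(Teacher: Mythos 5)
Your proposal is correct and follows essentially the same route as the paper: expand $PQ$ using $P\overline{P}=I$ to match $\overline{Q}$ termwise, then specialise to $Y=\xi P$, rewrite $Q=\overline{\xi}\,\overline{P}+\xi I$ via $\overline{P}P=I$, and rule out singularity by the eigenvalue condition on $\overline{P}$. Your explicit remark that $P\overline{P}=I$ forces $\overline{P}=P^{-1}$ and hence $\overline{P}P=I$ is a small but welcome clarification of a step the paper uses tacitly.
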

\begin{proof}
  Note that $PQ=P\overline{Y}+P\overline{P}Y=Y+P\overline{Y}=\overline{Q}$, as claimed.
  The claimed choice of $ξ$ is possible as $F$ is dense in $\mathbb{C}$. 
  Further, with $Q=\overline{ξ}\overline{P}+ξ\overline{P}P=\overline{ξ}(\overline{P}+\frac{ξ}{\overline{ξ}}I)$ we see that $Qv=0$ holds for a non-zero vector $v$ if and only if
  $\overline{P}v=-\frac{ξ}{\overline{ξ}}v$, which is not possible by the choice of $ξ$.
\end{proof}

To complete the proof of Theorem~\ref{compthm} is suffices to observe that $Q^{-1}ρ(g) Q∈ M_d(E)$
for any $g∈ G$.

One can solve \eqref{normeq} in the case of odd $d$ without resorting to number-theoretic tools. 
\begin{lemma}\label{odddeg}
  Let $d=2k+1$. Then, \eqref{normeq} for $\mu$ in $P\overline{P}=μ I$ is solved by 
  $x=\mu^{-k}\det P$.
\end{lemma}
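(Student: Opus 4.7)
The plan is a direct verification using two facts already established: first, that $\mu\in E\subset\mathbb{R}$ by Lemma~\ref{PPbar}, so that $\overline{\mu}=\mu$ and hence $\overline{\mu^{-k}}=\mu^{-k}$; second, that $P\overline{P}=\mu I$ where $I$ is the $d\times d$ identity matrix with $d=2k+1$.

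With these in hand, I would simply compute $x\overline{x}$ for the proposed $x=\mu^{-k}\det P$. Since $\mu^{-k}$ is real, it pulls out of the conjugation, yielding
\[
x\overline{x} \;=\; \mu^{-k}(\det P)\cdot \mu^{-k}\overline{(\det P)} \;=\; \mu^{-2k}\,\det(P)\det(\overline{P}) \;=\; \mu^{-2k}\,\det(P\overline{P}).
\]
Now substitute $P\overline{P}=\mu I$ and use $\det(\mu I)=\mu^d=\mu^{2k+1}$ to get
\[
x\overline{x} \;=\; \mu^{-2k}\cdot\mu^{2k+1} \;=\; \mu,
\]
which is exactly \eqref{normeq}.

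There is no real obstacle to overcome here; the only subtlety worth flagging is the use of $\overline{\mu}=\mu$, which is precisely where the preceding Lemma~\ref{PPbar} is needed, and the parity of $d$, which is what makes the exponent $-2k+d$ equal to $1$. Note that this approach breaks down for even $d$ because then $\det(\mu I)=\mu^{2k}$ gives $x\overline{x}=1\ne \mu$ in general, explaining why the even case genuinely requires the full strength of Theorem~\ref{mainthm} via the norm equation.
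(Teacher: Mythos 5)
Your verification is correct and is essentially the same argument as the paper's: both rest on $\overline{\mu}=\mu$ from Lemma~\ref{PPbar} and on $\det(P)\overline{\det(P)}=\det(P\overline{P})=\det(\mu I)=\mu^{2k+1}$, with the odd parity of $d$ making the leftover exponent equal to $1$. The only difference is presentational --- you verify the given $x$ directly while the paper derives it by factoring $\mu=(\lambda/\mu^k)(\overline{\lambda}/\mu^k)$ --- and your closing remark about the even case matches the paper's reason for treating odd $d$ separately.
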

\begin{proof}
Let $\lambda:=\det P$. Then 
$\det (P\overline{P})=\lambda\overline{\lambda}=\overline{\lambda}\lambda=\det(μ I)=\mu^{2k+1}$.
Thus $\mu=\overline{\mu}=\frac{\lambda}{\mu^k}\frac{\overline{\lambda}}{\mu^k}$.
Replacing $P$ with $P'=\frac{\mu^k}{\lambda}P$ we see that $P'\overline{P'}=I$.
\end{proof}

\section{Related work and remarks}
The paper \cite{MR2531221} studies a closely related algorithmic question of
minimising the degree of the number field needed to write down a  complex
representation. It is known that such a field need not be cyclotomic.
On the other hand, computer algebra systems designed for computing in groups, such as 
GAP \cite{GAP4} and Magma \cite{MR1484478} typically use cyclotomic fields for computation
with characteristic zero representations of finite groups.
In particular, this work came as an analysis of a question \cite{GAPforum} posed on the
GAP discussion forum. 

Lemma~\ref{Pexists}  and its proof are essentially a refinement of an argument from the proof of
\cite[Thm.31]{MR0450380}. Lemmata~\ref{odddeg} and \ref{Qrnd} appear to be novel, as well as
Theorem~\ref{mainthm}.

\subsubsection{Acknowledgement.} The author thanks Denis Rosset, Stephen Glasby,
Kaashif Hymabaccus, Robert Guralnick, and many participants of MathOverflow,
in particular Will Sawin \cite{391269}, for fruitful discussions.

Special thanks go to Robert Guralnick, who, in his capacity
of a moderator for arXiv.org, caught a glaring omission in an earlier version of this text,
submitted there.

\bibliography{realcyc}
\bibliographystyle{alpha}
\end{document}